\newtheorem{theorem}{Theorem}[section]
\newtheorem{lemma}[theorem]{Lemma}
\newtheorem{corollary}[theorem]{Corollary}
\theoremstyle{definition}
\theoremstyle{remark}
\numberwithin{equation}{section}
\newcommand{\NN}{\mathbb{N}}
\newcommand{\CC}{\mathbb {C}}
\begin{document}
\setcounter{page}{1}
\title[Schatten class generalized  Volterra companion integral operators ] {Schatten class generalized  Volterra companion integral  operators  }
\author [Tesfa  Mengestie]{Tesfa  Mengestie }
\address{Department of Mathematical Sciences \\
Stord/Haugesund University College (HSH)\\
Klingenbergvegen 8, N-5414 Stord, Norway}
\email{\textcolor[rgb]{0.00,0.00,0.84}{Tesfa.Mengestie@hsh.no}}
\subjclass[2010]{Primary 47B32; Secondary 46E22,46E20,47B33 }
 \keywords{Fock space, Schatten Class, Berezin transform, Volterra operator, generalized Volterra companion operator.}
 \begin{abstract}
We study the Schatten   class membership of  generalized  Volterra companion  integral operators on the standard  Fock spaces $\mathcal{F}_\alpha^2$.  The  Schatten  $\mathcal{S}_p(\mathcal{F}_\alpha^2)$ membership of the operators are characterized  in terms of  $L^{p/2}$ integrability of  certain generalized Berezin type
integral transforms  on the complex plane. We also give   a more simplified and easy to apply description in terms of $L^p$ integrability of  the symbols  inducing the operators against a supper exponentially decreasing weights.  An asymptotic estimates for the $\mathcal{S}_p(\mathcal{F}_\alpha^2)$ norms of  the operators have been also provided.
\end{abstract}
\maketitle
\section{Introduction and main results}
For  functions $f$ and $g$, we consider the  Volterra type
integral operator $V_g$ and its companion $I_g$   defined by
\begin{align*}
 V_gf(z)= \int_0^z f(w)g'(w) dw \ \ \ \text{and} \ \  \ I_gf(z)= \int_0^z f'(w)g(w) dw.
\end{align*}
Performing integration by parts in any one of the above integrals
gives  the relation $$V_g f+ I_g f= M_g f-f(0)g(0), $$ where $M_g
f= g f$ is the multiplication operator induced by $g$.   These integral  operators have been studied extensively on various  spaces of  holomorphic functions with the aim to explore the connection between
their behaviours with the function theoretic properties of  the
symbols $g$ especially after the works of  Pommerenke \cite{Pom}
and subsequently by Aleman, Cima and Siskakis \cite{ALC,Alsi1,Alsi2,Si}. Later in 2008,  S. Li and S. Stevi\'c took
 the study further by introducing the following
 operators induced by pairs of holomorphic symbols
$(g,\psi)$:
\begin{align}
\label{OKJ}
I_{(g,\psi)} f(z)= \int_0^z f'(\psi(w)) g(w) dw, \ \ C_{(g,\psi)} f(z)= \int_0^{\psi(z)} f'(w) g(w)dw,\\
V_g^{\psi} f(z)= \int_0^z f(\psi(w)) g'(w) dw, \ \ \text{and} \ \
\ C_ g^\psi f(z)= \int_0^{\psi(z)} f(w) g'(w)dw, \label{OKK}
\end{align} and studied their operator theoretic properties  on some spaces of analytic
functions on the unit disk; see for example \cite{SLI, LIS, jmaa345}. Since then, these class of generalized  integral operators
have constituted an active area of research. There has been in particular a growing interest  in studying the   operators $V_g^{\psi}$ and $C_ g^\psi$  partly  because some of their properties are  related to the notion of Carleson measures and properties of Toeplitz operators, which are readily available for several known spaces. In contrast,
the operators $I_{(g,\psi)}$ and $C_{(g,\psi)}$ have enjoyed little attentions even if they  have found applications in the study of linear isometries of
spaces of holomorphic functions. An interesting example in this arena could be that; if  $D^p$ denotes the
space of all analytic functions $f$ in the unit disc for which its
derivative $f'$ belongs to the Hardy space $H^p,$ then  for
$p\neq2,$ any surjective isometry $U$ of $D^p$ under the norm
$\|f\|_{D^p}= |f(0)|+ \|f'\|_{H^p}$ is of the form $$Uf= \lambda
f(0)+ \lambda I_{(g,\psi)}f$$ for some unimodular $\lambda$ in $\CC$, a nonconstant inner function $\psi$ and a function $g$ in $H^p$ \cite{FJ}.

The bounded, compact and membership in the Schatten class properties of the   operators in \eqref{OKK}   acting on  the
classical Fock spaces were studied in \cite{TM,TM0}. Recently, the study there was pursued  further and  the bounded and compact properties of
the  operators  in \eqref{OKJ} were  addressed   in \cite{TMSS}. In this note, we  continue those  lines of researches
and address the question of Schatten class membership for  this  class of   operators.  It turns out that  such maps  belong to the Schatten $\mathcal{S}_p(\mathcal{F}_\alpha^2)=: \mathcal{S}_p $ class  if and only if  certain Berezin type integral  transforms  are $L^{p/2}$ integrable on the complex plane $\CC$. After that,  a more easy and simple to apply description is given.  As will be seen later, an immediate  consequence of our main results show that the operators in \eqref{OKK} belong to the  $\mathcal{S}_p$ class  whenever  the class of operators in \eqref{OKJ} do while the converse in general fails.

We may mention  that the operators in \eqref{OKJ} are called the generalized Volterra companion operators because the particular choice $\psi(z)= z$ reduces both $I_{(g,\psi)}$ and $C_{(g,\psi)}$  to the  Volterra companion operator
$I_g$.   Some call them the   generalized composition operators because  the choice
$ g=\psi'$  and $g = 1$ respectively  reduce  the operators $I_{(g,\psi)} $ and $C_{(g,\psi)}$  to the
composition operator $ C_\psi$  up to  certain constants.

The classical  Fock space $\mathcal{F}_\alpha^2$ consists
of all entire functions $f$ for which
\begin{align}
\label{define}
\|f\|^2=  \frac{\alpha }{\pi}\int_{\CC} |f(z)|^2
e^{-\alpha |z|^2} dm(z) <\infty,
\end{align} where   $dm$ denotes the
usual Lebesgue area  measure on $\CC$ and $\alpha$ is a positive parameter.  The space  $\mathcal{F}_\alpha^2$ is  a reproducing kernel Hilbert space with kernel function  $K_{w}(z)= e^{\alpha \langle z, w\rangle}$ and
normalized kernel function $k_{w}(z)= e^{\alpha\langle z, w\rangle-\alpha|w|^2/2}.$ Because of the reproducing property of the kernel and Parseval identity, it  holds  that
\begin{align}
\label{kernel}
K_{w}(z)=\sum_{n=1}^\infty \langle K_w, e_n\rangle e_n(z) = \sum_{n=1}^\infty e_n(z) \overline{e_n(w)} \  \text{and}\ \ \| K_w\|^2= \sum_{n=1}^\infty |e_n(w)|^2
\end{align} for any orthonormal basis $(e_n)_{n\in\NN}$ of $\mathcal{F}_\alpha^2$.  These series representations of $K_w$ and its norm  will be  used several times in  our  subsequent considerations. An immediate consequence of \eqref{kernel} is  that
  \begin{align}
  \label{sidee}
  \frac{\partial}{\partial\overline{w}}K_w(z)=\sum_{n=1}^\infty e_n(z) \overline{e_n'(w)}, \  \text{and}\ \ \Big\|\frac{\partial}{\partial \overline{w}} K_w\Big\|^2= \sum_{n=1}^\infty |e_n'(w)|^2.
  \end{align}
We set $Q_g(z)=|g(z)|e^{-\frac{\alpha }{2}|z|^2}(1+|z|)^{-1}.$
 Then our
first result is  expressed in terms of
generalized Berezin type integral transforms
\begin{align*}
  B_{(|g|,\psi)} (w)&= \int_{\CC} \big|(|w|+1)k_w(\psi(z))Q_g(z)\big|^2 dm(z)  \ \ \ \text{and} \ \ \ \ \\
B_{(|g(\psi)|,\psi)} (w)&=\int_{\CC}\big |(|w|+1)k_w(\psi(z))\psi'(z)Q_{g(\psi)}(z)\big|^2 dm(z).
\end{align*}
Having fixed  the notions, we may now state our first  main result.
\begin{theorem}\label{thm1}
Let $0<p<\infty$ and $(g,\psi)$ be a pair of entire
functions on $\CC$. Then the operator
\begin{enumerate}
 \item $I_{(g,\psi)}: \mathcal{F}_\alpha^2 \to \mathcal{F}_\alpha^2$ belongs to the Schatten $\mathcal{S}_p$ class if and only if $B_{(|g|,\psi)}$ belongs to $L^{p/2}(\CC, dm)$. In this case, we also have the asymptotic norm estimate
     \begin{equation}
     \label{hilbert}
     \| I_{(g,\psi)}\|_{\mathcal{S}_p} \simeq \Bigg(\int_{\CC}B_{(|g|,\psi)}^{p/2} (z)dm(z)\Bigg)^{1/p}.
     \end{equation}
     \item $C_{(g,\psi)}: \mathcal{F}_\alpha^2 \to
\mathcal{F}_\alpha^2$ belongs to the  Schatten $\mathcal{S}_p$ class if and only if
$B_{(|g(\psi)|,\psi)}$ belongs to $L^{p/2}(\CC, dm)$. Furthermore, we have
\begin{equation*}
 \| C_{(g,\psi)}\|_{\mathcal{S}_p} \simeq    \Bigg( \int_{\CC}B_{(|g(\psi)|,\psi)}^{p/2} (z)dm(z)\Bigg)^{1/p}.
     \end{equation*}
     \end{enumerate}
\end{theorem}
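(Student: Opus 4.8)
The plan is to reduce the Schatten membership of the integral operators to that of a positive Toeplitz-type operator built from reproducing kernels, and then to run a Luecking-type lattice argument. First I would record the derivative norm equivalence on $\mathcal{F}_\alpha^2$: for entire $F$ with $F(0)=0$,
\[
\|F\|^2 \simeq \int_{\CC} |F'(z)|^2 (1+|z|)^{-2} e^{-\alpha|z|^2}\,dm(z).
\]
Applying this to $F=I_{(g,\psi)}f$, whose derivative is $F'(z)=g(z)\,f'(\psi(z))$, yields $\|I_{(g,\psi)}f\|^2\simeq\int_{\CC}|f'(\psi(z))|^2 Q_g(z)^2\,dm(z)$. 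Since $f'(\zeta)=\langle f,\partial_{\overline w}K_w|_{w=\zeta}\rangle$ by \eqref{sidee}, the right-hand side is the quadratic form $\langle Af,f\rangle$ of the positive operator $A=\int_{\CC}\big(\partial_{\overline w}K_w|_{w=\psi(z)}\big)\otimes\big(\partial_{\overline w}K_w|_{w=\psi(z)}\big)\,Q_g(z)^2\,dm(z)$. Because $A$ and $I_{(g,\psi)}^*I_{(g,\psi)}$ are comparable as quadratic forms, their eigenvalues are comparable, so $\|I_{(g,\psi)}\|_{\mathcal{S}_p}^2=\|I_{(g,\psi)}^*I_{(g,\psi)}\|_{\mathcal{S}_{p/2}}\simeq\|A\|_{\mathcal{S}_{p/2}}$ for every $0<p<\infty$; thus it suffices to compute $\|A\|_{\mathcal{S}_{p/2}}$.

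Next I would normalize the derivative kernels. A direct computation gives $\|\partial_{\overline w}K_w\|^2\simeq(1+|w|)^2 e^{\alpha|w|^2}$, so writing $\widehat k_w^{[1]}$ for the unit vector in the direction of $\partial_{\overline w}K_w$ converts $A$ into a genuine Toeplitz operator $T_\omega=\int_{\CC}\big(\widehat k_w^{[1]}\otimes\widehat k_w^{[1]}\big)\,d\omega(w)$ whose symbol $\omega=\psi_{*}\nu_g$ is the push-forward under $\psi$ of $d\nu_g(z)=(1+|\psi(z)|)^2 e^{\alpha|\psi(z)|^2}Q_g(z)^2\,dm(z)$; here the extra factor $(1+|w|)^2$ is exactly the one appearing in $B_{(|g|,\psi)}$. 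The localization of $\widehat k_w^{[1]}$ coincides with that of the ordinary normalized kernel $k_w$ (indeed $\widehat k_w^{[1]}\approx k_w$ on the bulk where $k_w$ concentrates), so $T_\omega$ is amenable to the standard Schatten analysis of Fock--Toeplitz operators.

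The core of the proof is then a Luecking-type characterization: fixing a lattice $\{a_j\}$ and a cover of $\CC$ by disks $D_j=D(a_j,r)$ of bounded multiplicity, one shows $\|T_\omega\|_{\mathcal{S}_{p/2}}^{p/2}\simeq\sum_j\omega(D_j)^{p/2}\simeq\int_{\CC}\omega(D(w,r))^{p/2}\,dm(w)$, and finally identifies the local average $\omega(D(w,r))$ with $B_{(|g|,\psi)}(w)$ up to a constant. This last identification is where the Gaussian factor $|k_w(\psi(z))|^2\simeq e^{\alpha|\psi(z)|^2}$ on $\{\psi(z)\in D(w,r)\}$ and the comparability $(1+|w|)\simeq(1+|\psi(z)|)$ there are used, the Gaussian tails being absorbed as usual. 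Combining the three comparabilities gives $\|I_{(g,\psi)}\|_{\mathcal{S}_p}^p\simeq\int_{\CC}B_{(|g|,\psi)}^{p/2}(z)\,dm(z)$, which is \eqref{hilbert}.

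For $p\ge2$ the necessity of the condition follows quickly from the convexity of $t\mapsto t^{p/2}$ together with the averaging property of the Berezin transform $\widetilde A(w)=\langle Ak_w,k_w\rangle$, which yields $\int_{\CC}\widetilde A^{p/2}\,dm\lesssim\operatorname{tr}(A^{p/2})$ and $\widetilde A\simeq B_{(|g|,\psi)}$ away from the origin; but the sufficiency, and the whole range $0<p<2$ where $t\mapsto t^{p/2}$ is concave and the Berezin transform does not by itself suffice, require the full discretization. I expect this reverse inequality, together with the bookkeeping forced by the possible non-injectivity of $\psi$ in forming the push-forward $\omega$, to be the main obstacle. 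Part (ii) follows by the same argument after replacing $g$ by $(g\circ\psi)\,\psi'$, since $(C_{(g,\psi)}f)'(z)=f'(\psi(z))\,g(\psi(z))\,\psi'(z)$ turns $Q_g$ into the weight defining $B_{(|g(\psi)|,\psi)}$.
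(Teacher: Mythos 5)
Your reduction is correct as far as it goes, and it is a genuinely different route from the paper's. The quadratic-form identity $\|I_{(g,\psi)}f\|^2\simeq\int_{\CC}|f'(\psi(z))|^2Q_g(z)^2\,dm(z)$ is exactly the paper's Littlewood--Paley step \eqref{olivia}, the identity $f'(\zeta)=\langle f,\partial_{\overline w}K_w|_{w=\zeta}\rangle$ is \eqref{sidee}, the normalization $\|\partial_{\overline w}K_w\|^2\simeq(1+|w|)^2e^{\alpha|w|^2}$ is (a sharpened form of) \eqref{def}, and the min--max argument that two positive compact operators with comparable quadratic forms have comparable singular values, hence comparable $\mathcal{S}_q$ norms for every $q$, is sound. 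The identification $\omega(D(w,r))\simeq B_{(|g|,\psi)}(w)$ also works: one direction is the paper's estimate \eqref{Ker}, and the other follows by decomposing $B_{(|g|,\psi)}(w)$ over annuli $\{|\xi-w|\in[k,k+1)\}$ and absorbing the Gaussian tails. The paper, by contrast, never discretizes: it runs the Schmidt decomposition directly, using H\"older on the eigenvalue series together with \eqref{kernel} for $p\geq2$, and for $0<p<2$ it passes to $I_{(g,\psi)}^*I_{(g,\psi)}\in\mathcal{S}_{p/2}$, the embeddings $\mathcal{F}_\alpha^p\subset\mathcal{F}_\alpha^2\subset\mathcal{F}_\alpha^q$ (Lemma~\ref{lem2}), and the structural Lemma~\ref{lem3} that compactness forces $\psi(z)=az+b$ with $|a|<1$ in order to control $\sup_n\int|f_n'(\psi(\zeta))|^2(1+|\psi(\zeta)|)^{-2}e^{-\alpha|\psi(\zeta)|^2}dm(\zeta)$ by a change of variables. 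Your push-forward measure $\omega$ sidesteps that lemma entirely, which is a genuine structural advantage of your scheme.

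The gap is the one you yourself flag: the two-sided Luecking-type estimate $\|T_\omega\|_{\mathcal{S}_{p/2}}^{p/2}\simeq\sum_j\omega(D_j)^{p/2}$ is asserted, not proved, and it is the entire content of the theorem once the reduction is made. The standard Fock-space versions of this estimate (e.g.\ in \cite{KZH2}) are stated for the Toeplitz operator built from the ordinary normalized kernels $k_w$, not from the derivative kernels $\widehat k_w^{[1]}$. Your remark that $\widehat k_w^{[1]}\approx k_w$ ``on the bulk'' is not a substitute for the two properties the Luecking machinery actually consumes: the off-diagonal decay $|\langle\widehat k_w^{[1]},\widehat k_v^{[1]}\rangle|\lesssim e^{-\frac{\alpha}{4}|w-v|^2}$ (which does hold, since $\langle\partial_{\overline w}K_w,\partial_{\overline v}K_v\rangle=\alpha(1+\alpha v\overline w)e^{\alpha v\overline w}$ and the polynomial prefactor is dominated by the normalizations), and, more seriously, the lower bound $\sum_j\omega(D_j)^{p/2}\lesssim\|T_\omega\|_{\mathcal{S}_{p/2}}^{p/2}$ in the range $0<p<2$, where $\mathcal{S}_{p/2}$ is not normable and one must split the lattice into finitely many well-separated sublattices, compare $T_\omega$ with an operator that is diagonal with respect to a Riesz system of derivative kernels, and control the off-diagonal error in the $\mathcal{S}_{p/2}$ quasinorm. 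None of this is automatic for the modified kernels, and the sufficiency direction for $p\geq 2$ likewise needs the upper bound $\|T_\omega\|_{\mathcal{S}_{p/2}}\lesssim\|\{\omega(D_j)\}\|_{\ell^{p/2}}$, which is not the triangle inequality. Until that discretization is carried out for $T_\omega$ (or the problem is reduced to a known Toeplitz result for the ordinary kernels), the proof is incomplete precisely at its load-bearing step.
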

Note that  notation $U(z)\lesssim V(z)$ (or
equivalently $V(z)\gtrsim U(z)$) means that there is a constant
$C$ such that $U(z)\leq CV(z)$ holds for all $z$ in the set of a
question. We write $U(z)\simeq V(z)$ if both $U(z)\lesssim V(z)$
and $V(z)\lesssim U(z)$.\\

We may notice  that  Theorem~\ref{thm1} is formulated in terms of a condition that involves double integral.  In what follows we give a more simplified and easy to apply description in terms of an $L^p$ integrability of  the symbol $g$ against  a super exponentially decreasing weight.
\begin{theorem}\label{thm2}
Let $0<p<\infty$ and $(g,\psi)$ be a pair of entire
functions on $\CC$.  Then
\begin{enumerate}
\item  $I_{(g,\psi)}: \mathcal{F}_\alpha^2 \to \mathcal{F}_\alpha^2$ belongs to   the Schatten $\mathcal{S}_p$ class if and only if
\begin{align}
\label{HS}
\int_{\CC} |g(z)|^p e^{\frac{p\alpha}{2}\big(|\psi(z)|^2-|z|^2\big)}  dm(z)<\infty.
\end{align}
\item \  $C_{(g,\psi)}: \mathcal{F}_\alpha^2 \to
\mathcal{F}_\alpha^2$ belongs to the Schatten $\mathcal{S}_p$ class if and only if
\begin{align*}
 \int_{\CC} |g(\psi(z))|^p e^{\frac{p\alpha}{2}\big(|\psi(z)|^2-|z|^2\big)}dm(z)<\infty.
\end{align*}
 \end{enumerate}
 \end{theorem}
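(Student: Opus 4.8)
The plan is to deduce Theorem~\ref{thm2} from Theorem~\ref{thm1}, so that the whole matter reduces to the purely analytic equivalence between $B_{(|g|,\psi)}\in L^{p/2}(\CC,dm)$ and the integrability condition \eqref{HS} (and likewise for part (ii)). First I would rewrite the Berezin type transform in a workable form. Using the pointwise identity $|k_w(u)|^2e^{-\alpha|u|^2}=e^{-\alpha|u-w|^2}$ together with the definition of $Q_g$, one obtains
\begin{equation*}
B_{(|g|,\psi)}(w)=(1+|w|)^2\int_{\CC}e^{-\alpha|\psi(z)-w|^2}\,|g(z)|^2e^{\alpha(|\psi(z)|^2-|z|^2)}(1+|z|)^{-2}\,dm(z).
\end{equation*}
Thus, with $d\lambda(z)=|g(z)|^2e^{\alpha(|\psi(z)|^2-|z|^2)}(1+|z|)^{-2}\,dm(z)$, the transform is a fixed Gaussian average, $B_{(|g|,\psi)}(w)=(1+|w|)^2\int_{\CC}e^{-\alpha|\psi(z)-w|^2}\,d\lambda(z)$.

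Next I would dispose of the geometry of $\psi$. If $g\equiv0$ both the operator and \eqref{HS} vanish, so assume $g\not\equiv0$. A short growth argument (or the boundedness description of \cite{TMSS}) shows that finiteness of either side forces $\psi(z)=az+b$ with $|a|\le1$: if $\psi$ grew faster than linearly, or if $|a|>1$, the factor $e^{\frac{p\alpha}{2}(|\psi(z)|^2-|z|^2)}$ would blow up along every direction, and since a nonzero entire $g$ cannot decay in all directions, \eqref{HS} would diverge. The degenerate case $a=0$ makes $I_{(g,\psi)}$ rank one and is handled directly. For $a\neq0$ the map $\psi$ is a bijection of $\CC$, so the substitution $u=\psi(z)$ turns $\lambda$ into an absolutely continuous measure $d\lambda=\rho(u)\,dm(u)$ and, because $1+|\psi(z)|\simeq1+|z|$, it identifies \eqref{HS} with $\int_{\CC}(1+|u|)^p\rho(u)^{p/2}\,dm(u)$ up to constants. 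After this change of variables the claim collapses to the single comparison
\begin{equation*}
\int_{\CC}(1+|w|)^p\big(G*\rho\big)(w)^{p/2}\,dm(w)\;\simeq\;\int_{\CC}(1+|u|)^p\rho(u)^{p/2}\,dm(u),\qquad G(u)=e^{-\alpha|u|^2}.
\end{equation*}

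The core estimate I would isolate is the elementary fact $\int_{\CC}(1+|w|)^pe^{-c|w-u|^2}\,dm(w)\simeq(1+|u|)^p$, valid for every $c>0$, which follows from $\tfrac{1+|u|}{1+|v|}\le1+|u+v|\le(1+|u|)(1+|v|)$ and the finiteness of all Gaussian moments. For the upper bound in the comparison I would split into two ranges: when $p\ge2$, Jensen's inequality applied to the probability measure proportional to $G(w-u)\,dm(u)$ gives $(G*\rho)^{p/2}\lesssim G*\rho^{p/2}$, while for $0<p<2$ the subadditivity $(\sum a_i)^{p/2}\le\sum a_i^{p/2}$ yields $(G*\rho)^{p/2}\le G^{p/2}*\rho^{p/2}$; in either case Fubini together with the core estimate (applied to $G$ or to the Gaussian $G^{p/2}$) converts the outer $w$-integral into the weight $(1+|u|)^p$ and finishes the bound.

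The genuinely delicate half, and the step I expect to be the main obstacle, is the reverse inequality, since a Gaussian average cannot in general be bounded below by its density. Here I would exploit the analytic structure still hidden in $\rho$: after the change of variables its nontrivial factor is $|g(\psi^{-1}(u))|^2$ times a Gaussian weight, and $|g\circ\psi^{-1}|^2$ is subharmonic, so its values are controlled by its averages over fixed discs. This lets the convolution be reversed in the averaged sense $(G*\rho)(w)\gtrsim\rho(w)$, and a standard lattice decomposition of $\CC$ into bounded-overlap discs then upgrades it to the integrated lower bound. Finally, part (ii) is entirely parallel: once $\psi$ is known to be affine the extra factor $\psi'(z)$ in $B_{(|g(\psi)|,\psi)}$ is the constant $a$, hence harmless, and running the same argument with $g$ replaced by $g\circ\psi$ yields the integrability condition with $|g(\psi(z))|^p$ in place of $|g(z)|^p$.
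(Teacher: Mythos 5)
Your overall strategy --- deducing Theorem~\ref{thm2} from Theorem~\ref{thm1} by proving the purely analytic equivalence
\begin{equation*}
\int_{\CC}(1+|w|)^p\big(G*\rho\big)(w)^{p/2}\,dm(w)\;\simeq\;\int_{\CC}(1+|u|)^p\rho(u)^{p/2}\,dm(u)
\end{equation*}
after the substitution $u=\psi(z)$ --- is sound, and your reduction of $B_{(|g|,\psi)}$ to a Gaussian convolution, the linearity of $\psi$, the ``core estimate'' $\int(1+|w|)^pe^{-c|w-u|^2}dm(w)\simeq(1+|u|)^p$, the Jensen argument for $p\ge 2$, and the sub-mean-value argument for the lower bound are all correct. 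But there is a genuine gap in the upper bound for $0<p<2$: the inequality $(G*\rho)^{p/2}\le G^{p/2}*\rho^{p/2}$ does \emph{not} follow from the subadditivity $(\sum a_i)^{p/2}\le\sum a_i^{p/2}$. That discrete inequality does not transfer to integrals against Lebesgue measure; on the contrary, since $t\mapsto t^{p/2}$ is concave for $p<2$, Jensen applied to the normalized Gaussian gives $G^{p/2}*\rho^{p/2}\lesssim (G*\rho)^{p/2}$ wherever the normalization is harmless, i.e.\ the inequality you invoke goes the \emph{wrong} way for a general density $\rho$. (A one-dimensional counterexample: $f=g=\chi_{[0,1]}$ gives $(f*g)^{s}\ge f^{s}*g^{s}$ with strict inequality on a set of positive measure for $0<s<1$.)

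The statement you need at that step is nevertheless true, but only because $\rho$ is not a general density: up to the Gaussian weight and the factor $(1+|u|)^{-2}$ it is the modulus squared of a holomorphic function. This is exactly the content of Lemma~\ref{lem2}(i) in the paper, which proves
\begin{equation*}
\int_{\CC} |k_w(\psi(\zeta))|^2 \frac{|g(\zeta)|^2e^{-\alpha |\zeta|^2}}{(1+|\zeta|)^2}\,dm(\zeta)
 \lesssim \Bigg( \int_{\CC} |k_w(\psi(\zeta))|^p \frac{|g(\zeta)|^pe^{-\frac{\alpha p}{2} |\zeta|^2}}{(1+|\zeta|)^p}\,dm(\zeta)\Bigg)^{\frac{2}{p}}
\end{equation*}
for $0<p\le 2$ by combining the inclusion $\mathcal{F}_\alpha^p\subset\mathcal{F}_\alpha^2$ with the Littlewood--Paley formula \eqref{olivia}; in your notation this is precisely $(G*\rho)^{p/2}\lesssim G^{p/2}*\rho^{p/2}$. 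You correctly sensed that the analytic structure of $\rho$ is indispensable for the reverse (lower) bound, but it is equally indispensable here. If you replace the subadditivity step by an appeal to (or a reproof of) Lemma~\ref{lem2}(i), your argument closes; two smaller points you should also tidy up are the boundary case $|a|=1$ (where \eqref{HS} forces $g\equiv 0$ because a nonzero entire function times a unimodular exponential cannot lie in $L^p(\CC)$, and where your Gaussian weight in $\rho$ degenerates) and an explicit treatment of the rank-one case $a=0$.
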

 As mentioned earlier setting  $\psi(z)= z$ reduces  the operators  in \eqref{OKJ} to $I_g$. By  Corollary~3.1
   of \cite{TMSS}, we noticed that $I_g$ belongs to $\mathcal{S}_p$ if and only if $g$ is the zero function.  This fails to hold  in general for
   the operator $I_{(g,\psi)}$ and $C_{(g,\psi)}$. One such example could be seen by  scaling $\psi$ as $\psi_0(z)= \frac{1}{2}z$. In this case for $p= 2$, condition
   \eqref{HS} holds if and only if
   \begin{equation*}
   \int_{\CC} |g(z)|^2 e^{-\frac{3\alpha}{4}|z|^2}  dm(z)<\infty.
   \end{equation*}
 Then $I_{(g_0,\psi_0)}$ belongs to $\mathcal{S}_2$ if we set for instance  $g_0(z)= z$   since
 \begin{equation*}
   \int_{\CC} |z|^2 e^{-\frac{3\alpha}{4}|z|^2}  dm(z)\simeq  \int_{0}^\infty r^3 e^{-\frac{3\alpha}{4}r^2}  dr =\frac{2}{9}\alpha^{-2} \Gamma(2)<\infty.
   \end{equation*}
   Seemingly, for this particular choice $(g_0, \psi_0)$, the operator  $C_{(g_0, \psi_0)}$ also belongs to the Schatten class $\mathcal{S}_2$. This example, in addition, verifies  that the operators $I_{(g,\psi)}$ and $C_{(g,\psi)}$ have a much richer operator theoretic structure than the operator $I_g$.\\

Our main results coupled  with  a similar result from \cite{TM} for  the class of operators in \eqref{OKK} give  the following sufficient  conditions for  the Schatten  class  membership of $V_g^\psi$ and $C_g^\psi$ .
 \begin{corollary}\label{cor2}
Let $0<p<\infty$ and $(g,\psi)$ be a pair of entire
functions on $\CC$. Then  if the operator
 \begin{enumerate}
 \item   $I_{(g,\psi)}: \mathcal{F}_\alpha^2 \to \mathcal{F}_\alpha^2$ belongs to $\mathcal{S}_p$ so does the  map
 $V_g^\psi: \mathcal{F}_\alpha^2 \to \mathcal{F}_\alpha^2$.
 \item   $C_{(g,\psi)}: \mathcal{F}_\alpha^2 \to \mathcal{F}_\alpha^2$  belongs to $\mathcal{S}_p$ so does the  map
  $ C_g^\psi: \mathcal{F}_\alpha^2 \to \mathcal{F}_\alpha^2$.
 \end{enumerate}
  \end{corollary}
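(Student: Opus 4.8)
The plan is to read off the corollary from the simplified descriptions already in hand, reducing everything to one weighted comparison between a symbol and its derivative. For the operators in \eqref{OKJ}, Theorem~\ref{thm2} says that $I_{(g,\psi)}\in\mathcal S_p$ is equivalent to finiteness of $\int_\CC|g(z)|^pe^{\frac{p\alpha}{2}(|\psi(z)|^2-|z|^2)}dm(z)$, and $C_{(g,\psi)}\in\mathcal S_p$ to the same integral with $g$ replaced by $g\circ\psi$. On the other side, I would invoke the companion simplified characterization from \cite{TM} for the operators in \eqref{OKK}. Since $(V_g^\psi f)'=(f\circ\psi)\,g'$ and $(C_g^\psi f)'=(f\circ\psi)\,(g\circ\psi)'$, the symbol enters those criteria only through a derivative, so $V_g^\psi\in\mathcal S_p$ and $C_g^\psi\in\mathcal S_p$ are governed respectively by finiteness of
\begin{equation*}
\int_\CC\Big(\frac{|g'(z)|}{1+|z|}\Big)^pe^{\frac{p\alpha}{2}(|\psi(z)|^2-|z|^2)}dm(z)\quad\text{and}\quad\int_\CC\Big(\frac{|(g\circ\psi)'(z)|}{1+|z|}\Big)^pe^{\frac{p\alpha}{2}(|\psi(z)|^2-|z|^2)}dm(z).
\end{equation*}
Both assertions then follow once I prove, for an arbitrary entire function $h$ (take $h=g$ for (i) and $h=g\circ\psi$ for (ii)) and the weight $W(z)=e^{\frac{p\alpha}{2}(|\psi(z)|^2-|z|^2)}$, that finiteness of $\int_\CC|h|^pW\,dm$ forces finiteness of $\int_\CC(|h'|/(1+|z|))^pW\,dm$.

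The engine of that implication is a Cauchy estimate performed at the natural length scale of the Fock weight. On a disc $D_z=D\big(z,c(1+|z|)^{-1}\big)$ the Gaussian factor $e^{-\alpha|w|^2}$ is comparable to $e^{-\alpha|z|^2}$, while Cauchy's formula on $\partial D_z$ gives $|h'(z)|\lesssim(1+|z|)\sup_{D_z}|h|$, hence $|h'(z)|(1+|z|)^{-1}\lesssim\sup_{D_z}|h|$. Since $|h|^p$ is subharmonic, the sub-mean value inequality upgrades the supremum to an average and yields
\begin{equation*}
\Big(\frac{|h'(z)|}{1+|z|}\Big)^p\lesssim(1+|z|)^2\int_{D(z,2c(1+|z|)^{-1})}|h(w)|^p\,dm(w).
\end{equation*}

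Multiplying by $W(z)$, integrating in $z$, and interchanging the order of integration by Fubini's theorem reduces the claim to the estimate $\int_{\{z\,:\,w\in D(z,2r_z)\}}(1+|z|)^2W(z)\,dm(z)\lesssim W(w)$, where $r_z=c(1+|z|)^{-1}$. On the relevant region $|z|\simeq|w|$ and the area is $O\big((1+|w|)^{-2}\big)$, so everything comes down to the local comparability $W(z)\simeq W(w)$ for $w\in D_z$. The $|z|^2$-part of $W$ is harmless, since $\big||z|^2-|w|^2\big|\lesssim|z-w|(|z|+|w|)\lesssim1$ on $D_z$; the genuine difficulty, and the main obstacle of the whole argument, is the $\psi$-part, which demands $\big||\psi(z)|^2-|\psi(w)|^2\big|=O(1)$, i.e. a bound of the form $|\psi'(z)|(1+|\psi(z)|)\lesssim1+|z|$ over these discs. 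I would dispatch this using the a priori structural constraints on $\psi$ forced by mere boundedness of the operators (from \cite{TMSS}): the admissible symbols are severely restricted — in the pure composition situation to affine maps $\psi(z)=az+b$ with $|a|\le1$ — and in all admissible cases satisfy $|\psi'(z)|(1+|\psi(z)|)\lesssim1+|z|$, so the weight is locally comparable and the Fubini estimate closes. Granting the lemma, finiteness of the $I_{(g,\psi)}$ (resp. $C_{(g,\psi)}$) integral immediately produces finiteness of the $V_g^\psi$ (resp. $C_g^\psi$) integral, which is precisely the asserted implication; the one-sidedness is intrinsic, since differentiation can only shrink the symbol in this weighted sense, whereas a large but slowly varying $g$ shows the converse must fail.
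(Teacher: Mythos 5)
Your proposal is correct and follows the route the paper intends: the corollary is presented as an immediate consequence of Theorem~\ref{thm2} together with the analogous simplified criterion from \cite{TM}, and its entire content is the weighted comparison you isolate, namely that finiteness of $\int_{\CC}|h|^{p}W\,dm$ with $W(z)=e^{\frac{p\alpha}{2}(|\psi(z)|^{2}-|z|^{2})}$ forces finiteness of $\int_{\CC}\bigl(|h'(z)|/(1+|z|)\bigr)^{p}W(z)\,dm(z)$, applied with $h=g$ and $h=g\circ\psi$. The only place you genuinely diverge from the paper's (implicit) argument is in how this comparison is established: you prove it by hand via a Cauchy estimate at scale $(1+|z|)^{-1}$, subharmonicity of $|h|^{p}$ and Fubini, using the affineness of $\psi$ only for local comparability of $W$. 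The paper already contains a shortcut: once Lemma~\ref{lem3} gives $\psi(z)=az+b$ with $|a|<1$, the weight $W$ equals, up to a constant and a translation, the Gaussian $e^{-\frac{p\beta}{2}|z|^{2}}$ with $\beta=\alpha(1-|a|^{2})>0$, so the required one-sided implication is exactly one half of Constantin's Littlewood--Paley estimate \eqref{olivia} applied with parameter $\beta$; your local argument is in effect a proof of that half, so nothing is wrong, and it avoids normalizing the weight. Two small points worth recording: the degenerate case $a=0$ should be dispatched separately ($V_{g}^{\psi}$ and $C_{g}^{\psi}$ are then rank one, and $1+|z|\simeq 1+|\psi(z)|$ fails, so the exact denominator in the criterion quoted from \cite{TM} would otherwise matter), and if that criterion is stated as ``compact plus integral condition,'' compactness of $V_{g}^{\psi}$ must also be inherited from that of $I_{(g,\psi)}$, which follows from the corresponding comparison of the boundedness and compactness conditions in \cite{TMSS}.
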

  The corollary shows that the conditions for Schatten class membership of the operators
  $I_{(g,\psi)}$ and $C_{(g,\psi)}$ are respectively stronger than  the corresponding
  conditions for $V_g^\psi$ and $ C_g^\psi$. But the  converses of the statements both
 in (i) and (ii) in general fail. To see this  we may in particular set $\psi(z)= z$ and observe that  the class of operators in \eqref{OKK} reduce to   the  operator $V_g$. By Corollary~4 of \cite{TM}, any compact $V_g$ belongs to $\mathcal{S}_p$ for all $g$  whenever  $p>2$ while its   $\mathcal{S}_p$ membership for $p\leq 2$  holds if and only if $g$ is a constant function.  On the other hand, by   Corollary~3.1
   of \cite{TMSS},  $I_g$ belongs to $\mathcal{S}_p$ if and only if $g$ is the zero function.     A similar observation was  recorded  in \cite{TMSS} contrasting the boundedness and compactness conditions for the two classes of maps in \eqref{OKJ} and \eqref{OKK}.

\section{Preliminaries}
Before embarking into the proof of our  first main result, we  give a key  lemma which  provides a  link as to   how the Berezin type integral transforms in the condition of
the theorem come into play.
\begin{lemma} \label{lem1} Let $ (g,\psi) $  be  a pair of  entire functions on $\CC $. Then for any function $f$ in
$\mathcal{F}_\alpha^2$, the following estimates hold.
\begin{align}
\label{rel1}
\|I_{(g,\psi)}f\|^2 \lesssim \int_{\CC} |f'(w)|^2\frac{e^{-\alpha |w|^{2}}}{(1+|w|)^2} B_{(|g|,\psi)}(w) dm(w).\ \ \ \ \\
\|C_{(g,\psi)}f\|^2 \lesssim \int_{\CC} |f'(w)|^2\frac{e^{-\alpha |w|^{2}}}{(1+|w|)^2} B_{(|g(\psi)|,\psi)}(w) dm(w).
\label{rel2}
\end{align}
\end{lemma}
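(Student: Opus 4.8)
The plan is to reduce each norm $\|I_{(g,\psi)}f\|^2$ and $\|C_{(g,\psi)}f\|^2$ to a weighted $L^2$ integral of the derivative of the image function, then to insert a pointwise sub-mean value estimate that rebuilds the Berezin type transforms, and finally to interchange the order of integration.

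First I would invoke the standard Fock-space norm estimate: for every entire $F$ with $F(0)=0$ one has
$$\|F\|^2\lesssim\int_\CC|F'(z)|^2\frac{e^{-\alpha|z|^2}}{(1+|z|)^2}\,dm(z).$$
Both $I_{(g,\psi)}f$ and $C_{(g,\psi)}f$ vanish at the origin, and differentiating via the fundamental theorem of calculus and the chain rule gives $(I_{(g,\psi)}f)'(z)=f'(\psi(z))g(z)$ and $(C_{(g,\psi)}f)'(z)=f'(\psi(z))g(\psi(z))\psi'(z)$. Feeding these into the estimate and recalling that $|Q_g(z)|^2=|g(z)|^2e^{-\alpha|z|^2}(1+|z|)^{-2}$ and $|Q_{g(\psi)}(z)|^2=|g(\psi(z))|^2e^{-\alpha|z|^2}(1+|z|)^{-2}$ yields
$$\|I_{(g,\psi)}f\|^2\lesssim\int_\CC|f'(\psi(z))|^2|Q_g(z)|^2\,dm(z),\qquad \|C_{(g,\psi)}f\|^2\lesssim\int_\CC|f'(\psi(z))|^2|\psi'(z)|^2|Q_{g(\psi)}(z)|^2\,dm(z).$$

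Next comes the crucial step: a pointwise sub-mean value inequality expressing $f'(\psi(z))$ through an integral against the normalized kernel. Since $f'$ is entire, I would apply the area mean value inequality to the subharmonic function $|h|^2$, where $h(w)=f'(w)e^{-\alpha w\overline{u}}$ with $u=\psi(z)$, over a fixed disc $D(u,r)$; estimating the Gaussian weight on that disc and using $|k_w(u)|^2=e^{\alpha|u|^2}e^{-\alpha|u-w|^2}$ to absorb the factor $e^{\alpha|\psi(z)|^2}$ that emerges gives
$$|f'(\psi(z))|^2\lesssim\int_\CC|f'(w)|^2e^{-\alpha|w|^2}|k_w(\psi(z))|^2\,dm(w).$$
Substituting this into the two displays above and applying Tonelli's theorem (all integrands are nonnegative, so the interchange is unconditional) to integrate in $z$ first, I recover exactly the inner integrals in the definitions of the transforms,
$$\int_\CC|k_w(\psi(z))|^2|Q_g(z)|^2\,dm(z)=\frac{B_{(|g|,\psi)}(w)}{(1+|w|)^2},\qquad \int_\CC\big|k_w(\psi(z))\psi'(z)Q_{g(\psi)}(z)\big|^2\,dm(z)=\frac{B_{(|g(\psi)|,\psi)}(w)}{(1+|w|)^2},$$
which upon collecting terms delivers precisely \eqref{rel1} and \eqref{rel2}.

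The routine parts are the differentiation of the two operators and the Tonelli interchange. The main obstacle is the pointwise estimate of the previous paragraph: one must choose the holomorphic modification $f'(w)e^{-\alpha w\overline{u}}$ so that the subharmonic mean value inequality reproduces exactly the kernel weight $|k_w(\psi(z))|^2$ rather than a crude bound in terms of $\|f\|$. Some care is also needed to confirm that the factor $(1+|w|)^2$ produced by normalizing the kernel cancels the $(|w|+1)^2$ built into the definitions of $B_{(|g|,\psi)}$ and $B_{(|g(\psi)|,\psi)}$, which is what makes the final weights align with those appearing in \eqref{rel1} and \eqref{rel2}.
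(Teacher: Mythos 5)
Your proposal is correct and follows essentially the same route as the paper: the Littlewood--Paley estimate of Constantin, the sub-mean-value property of $|f'|^2$, and a Fubini/Tonelli interchange that reassembles the Berezin type transform; you merely fold the kernel identity $|k_w(u)|^2=e^{\alpha|u|^2-\alpha|u-w|^2}$ into a global pointwise bound for $|f'(\psi(z))|^2$, whereas the paper works locally on $D(w,1)$ with a pullback measure and the lower bound $|k_w(\xi)|^2\gtrsim e^{\alpha|\xi|^2}$. One small slip: $C_{(g,\psi)}f(0)=\int_0^{\psi(0)}f'(w)g(w)\,dw$ need not vanish unless $\psi(0)=0$, so for \eqref{rel2} the $|F(0)|^2$ term in the Littlewood--Paley estimate has to be disposed of separately (a point the paper itself also passes over by declaring the proof of \eqref{rel2} ``very similar'' to that of \eqref{rel1}).
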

\begin{proof} The proof of  the  lemma is implicitly contained  in the proof of Theorem~3.1  in  \cite{TMSS}. We explicitly reproduce it here for the sake of complete exposition.      Since $|f'|^2$ is  subharmonic  for each holomorphic function $f$, by Lemma~1 of \cite{JIKZ}, we have the  local estimate
   \begin{equation}
   \label{subharmonic}
   |f'(z)|^2e^{-\alpha |z|^2} \lesssim \int_{D(z,1)} |f'(w)|^2 e^{-\alpha|w|^2}
   dm(w).
   \end{equation}On the other hand, a recent result of Constantin \cite{Olivia}   ensures that for
each entire function $f$, the Littlewood--Paley type estimate,
 \begin{align}
  \label{olivia}
\int_{\CC} |f(z)|^p e^{-\frac{\alpha p}{2}|z|^2}dm(z) \simeq |f(0)|^p+\int_{\CC} |f'(z)|^p(1+|z|)^{-p}e^{-\frac{\alpha p}{2}|z|^2}dm(z),
\end{align} holds for all $0<p<\infty$.  Applying  this for $p= 2$  and \eqref{subharmonic}, we obtain
\begin{equation*}
\label{one} \|I_{(g,\psi)}f\|^2 \lesssim \int_{\CC}
e^{\alpha\big( |\psi(z)|^2-|z|^2)}
\frac{|g(z)|^2}{(1+|z|)^2} \int_{\CC}
\chi_{D(\psi(z),1)}(w)|f'(w)|^2e^{-\alpha |w|^2}dm(w)dm(z),\end{equation*}
 where $\chi_{D(\psi(z),1)}$ refers to the characteristic function on the set $D(\psi(z),1)$. Since
$\chi_{D(\psi(z),1)}(w)= \chi_{D(w,1)}(\psi(z))$, for each point $w$ and $z$ in $\CC$, by Fubini's
theorem it follows that  the right-hand side of the above
inequality  is equal to
\begin{align}
   \label{II}
   \int_{\CC}|f'(w)|^2&e^{-\alpha |w|^2} \int_{D(w,1)}e^{ \alpha|\xi|^2} d\mu_{(g,\psi)}(\xi)
   dm(w)\nonumber\\
   &\simeq \int_{\CC}|f'(w)|^2\frac{e^{-\alpha|w|^{2}}}{(1+|w|)^2} \int_{D(w,1)}(1+|\xi|)^2e^{ \alpha|\xi|^2} d\mu_{(g,\psi)}(\xi)  dm(w),
      \end{align} where we set $\xi= \psi(z)$,
      \begin{equation*}
d\mu_{(g,\psi)}(E)=\int_{\psi^{-1}(E)} \frac{|g(z)|^2}{(1+|z|)^2}
e^{-\alpha |z|^2} dm(z)
\end{equation*} for every Borel  subset $E$ of $\CC$,
and use the fact that $1+|w| \simeq 1+ |\xi|$ whenever $\xi$
belongs to the disc $D(w,1)$. To arrive at the desired conclusion, it suffices
to show that
\begin{align*}
 \int_{D(w,1)}(1+|\xi|)^2e^{\alpha|\xi|^2} d\mu_{(g,\psi)}(\xi) \lesssim B_{(|g|,\psi)}(w).
\end{align*} But this estimate easily holds because
\begin{align*}
 \int_{D(w,1)}(1+|\xi|)^2e^{ \alpha|\xi|^2} d\mu_{(g,\psi)}(\xi)
&\simeq (1+|w|)^2\int_{D(w,1)}e^{ \alpha|\xi|^2} d\mu_{(g,\psi)}(\xi)\nonumber\\
&\lesssim B_{(|g|,\psi)}(w),
\end{align*}
where in the last relationship we have used a simple fact that if
$\xi\in D(w,1)$, then
\begin{align}
\label{Ker} |k_w(\xi)|^2=|e^{-\frac{\alpha}{2}|w|^2+\alpha\overline{w}\xi}|^2=
e^{\alpha(|\xi|^2-|\xi-w|^2)} \gtrsim e^{\alpha|\xi|^2},
\end{align}
and integrating \eqref{Ker} against the measure $\mu_{(g,\psi)}$ we have
that
\begin{align*}
\int_{D(w,1)} e^{ \alpha|\xi|^2}d\mu_{(g,\psi)}(\xi)  \lesssim
\int_{\CC}|k_w(\xi)|^2d\mu_{(g,\psi)}(\xi)=\frac{B_{(|g|,\psi)}(w)}{(1+|w|)^2}.
\end{align*} The proof of the estimate in \eqref{rel2} is very similar to the proof of \eqref{rel1}. Thus we omit it.
\end{proof}
\begin{lemma}\label{lem2} Let  $(g,\psi)$ be a pair of entire functions on $\CC$. Then
\begin{enumerate}
\item if $0<p\leq 2$, we have  the estimate
\begin{equation*}
\label{forward}
\int_{\CC} |k_w(\psi(\zeta))|^2 \frac{|g(\zeta)|^2e^{-\alpha |\zeta|^2}}{(1+|\zeta|)^2}  dm(\zeta)
 \lesssim \Bigg( \int_{\CC} |k_w(\psi(\zeta))|^p \frac{|g(\zeta)|^pe^{-\frac{\alpha p}{2} |\zeta|^2}}{(1+|\zeta|)^p}  dm(\zeta)\Bigg)^{\frac{2}{p}}.
\end{equation*}
\item if $p> 2$,  we have the reverse estimate
\begin{align*}
\int_{\CC} |k_w(\psi(\zeta))|^p \frac{|g(\zeta)|^pe^{-\frac{\alpha p}{2} |\zeta|^2}}{(1+|\zeta|)^p}  dm(\zeta) \lesssim
\Bigg(\int_{\CC} |k_w(\psi(\zeta))|^2 \frac{|g(\zeta)|^2e^{-\alpha |\zeta|^2}}{(1+|\zeta|)^2}  dm(\zeta)\Bigg)^{\frac{p}{2}}.
\end{align*}
\end{enumerate}
\end{lemma}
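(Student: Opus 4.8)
The plan is to recognize both inequalities as the two halves of a single comparison between the $L^1$-mass and the $L^{p/2}$-mass of one nonnegative weight, and to prove that comparison by discretizing over a lattice. Write $G(\zeta)=k_w(\psi(\zeta))g(\zeta)$, which is entire in $\zeta$, and set $H(\zeta)=|G(\zeta)|^2e^{-\alpha|\zeta|^2}(1+|\zeta|)^{-2}$. Then the left-hand integral in each part equals $\int_{\CC}H\,dm$, while the right-hand integrand is exactly $H^{p/2}$, so (i) reads $\int_{\CC}H\,dm\lesssim(\int_{\CC}H^{p/2}\,dm)^{2/p}$ and (ii) is its reverse. Raising to the power $p/2$, these are precisely the subadditivity ($0<p\le2$) and superadditivity ($p\ge2$) of $t\mapsto t^{p/2}$, inequalities that hold verbatim for sums of nonnegative numbers but fail for integrals, which is exactly why a discretization is forced upon us.

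First I would fix a lattice $\{z_k\}\subset\CC$ for which the unit discs $D(z_k,1)$ cover the plane while the doubled discs $D(z_k,2)$ have overlap bounded by a fixed $N$. Covering together with bounded overlap gives $\int_{\CC}H\,dm\simeq\sum_k\int_{D(z_k,1)}H\,dm$ and likewise for $H^{p/2}$, so everything reduces to a per-cell comparison together with the elementary power inequalities $\sum_k b_k^{p/2}\le(\sum_k b_k)^{p/2}$ for $p\ge2$ and $(\sum_k b_k)^{p/2}\le\sum_k b_k^{p/2}$ for $0<p\le2$. Concretely, for (ii) I would bound $\int_{\CC}H^{p/2}\,dm\le\sum_k\int_{D(z_k,1)}H^{p/2}\,dm$, invoke the per-cell estimate $\int_{D(z_k,1)}H^{p/2}\,dm\lesssim(\int_{D(z_k,2)}H\,dm)^{p/2}$, apply superadditivity to $c_k=\int_{D(z_k,2)}H\,dm$, and collapse $\sum_k c_k\lesssim\int_{\CC}H\,dm$ by bounded overlap; part (i) runs the same way with the roles reversed, using the companion per-cell estimate $(\int_{D(z_k,1)}H\,dm)^{p/2}\lesssim\int_{D(z_k,2)}H^{p/2}\,dm$ and subadditivity.

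The heart of the matter is the per-cell comparison, and the main obstacle is that the Gaussian $e^{-\alpha|\zeta|^2}$ is not slowly varying on a unit disc and so cannot be frozen at the centre; only the genuinely slowly varying factor $(1+|\zeta|)^{-2}\simeq(1+|z_k|)^{-2}$ may be pulled out of a cell. The remedy is to keep the Gaussian bundled with $|G|^2$ and to use the Fock-space sub-mean-value inequality underlying \eqref{subharmonic}, valid for every exponent $q>0$ by the same subharmonicity argument, in the form $|G(\zeta)|^qe^{-\frac{\alpha q}{2}|\zeta|^2}\lesssim\int_{D(\zeta,1)}|G|^qe^{-\frac{\alpha q}{2}|\cdot|^2}\,dm$, whence $\sup_{D(z_k,1)}|G|^qe^{-\frac{\alpha q}{2}|\cdot|^2}\lesssim\int_{D(z_k,2)}|G|^qe^{-\frac{\alpha q}{2}|\cdot|^2}\,dm$. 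Combining this with the trivial estimate that an integral over $D(z_k,1)$ is at most $\pi$ times the supremum there, and using that $\sup$ commutes with the monotone map $t\mapsto t^{p/2}$, sandwiches both $\int_{D(z_k,1)}|G|^2e^{-\alpha|\cdot|^2}\,dm$ and $\int_{D(z_k,1)}|G|^pe^{-\frac{\alpha p}{2}|\cdot|^2}\,dm$ between a common supremum and an $L^1$- or $L^{p/2}$-mass over the doubled disc. Taking $q=2$ then yields the per-cell estimate needed for (ii) and $q=p$ the one needed for (i). The only remaining bookkeeping is to check that the enlargement from $D(z_k,1)$ to $D(z_k,2)$ stays compatible with the bounded-overlap constant, which the choice of lattice guarantees.
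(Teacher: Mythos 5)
Your proof is correct, but it takes a genuinely different route from the paper's. The paper introduces the antiderivative $F(z)=\int_0^z k_w(\psi(\zeta))g(\zeta)\,d\zeta$, observes via Constantin's Littlewood--Paley estimate \eqref{olivia} that the two sides of the lemma are comparable to $\|F\|_{\mathcal{F}_\alpha^2}^2$ and $\|F\|_{\mathcal{F}_\alpha^p}^2$ respectively (note $F(0)=0$), and then simply quotes the norm inclusions $\mathcal{F}_\alpha^p\subset\mathcal{F}_\alpha^2$ for $p\le 2$ and $\mathcal{F}_\alpha^2\subset\mathcal{F}_\alpha^p$ for $p>2$ to finish in two lines. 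You instead prove the comparison between $\int H\,dm$ and $\int H^{p/2}\,dm$ directly, by a bounded-overlap lattice decomposition, the sub-mean-value inequality for $|G|^qe^{-\frac{\alpha q}{2}|\cdot|^2}$ (Lemma~1 of \cite{JIKZ}, which indeed holds for every $q>0$ and applies since $G=k_w(\psi(\cdot))g$ is entire), the slow variation of $(1+|\zeta|)^{-2}$ across a unit cell, and the sub/superadditivity of $t\mapsto t^{p/2}$ for sequences. This is essentially the standard proof of the Fock-space inclusions themselves, so you have unwound the paper's black boxes into a self-contained argument: what you lose in brevity you gain in transparency, and your observation that the scalar inequality fails for general nonnegative integrands --- so that analyticity must enter through subharmonicity --- pinpoints exactly where the real work lies. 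The bookkeeping all checks out: the per-cell estimates with $q=2$ and $q=p$ give precisely the two directions, and the doubled discs remain compatible with the overlap constant.
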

\begin{proof}  Using  the fact that $\mathcal{F}_\alpha^p \subset \mathcal{F}_\alpha^2$ for $0<p\leq 2$
  \cite[Theorem~7.2]{SJR} and the Littlewood--Paley estimate for Fock spaces, we have
\begin{align}
\Bigg(\int_{\CC} |k_w(\psi(\zeta))|^2 \frac{|g(\zeta)|^2e^{-\alpha |\zeta|^2}}{(1+|\zeta|)^2}  dm(\zeta)\Bigg)^{\frac{1}{2}} \qquad \qquad  \qquad \qquad \qquad  \qquad \qquad \qquad  \nonumber\\
\simeq \Bigg( \int_{\CC}\Bigg|\int_{0}^z k_w(\psi(\zeta))g(\zeta) dm(\zeta)\Bigg|^2 e^{-\alpha|z|^2} dm(z)\Bigg)^{\frac{1}{2}}\nonumber\\
\lesssim \Bigg( \int_{\CC}\bigg|\int_{0}^z k_w(\psi(\zeta))g(\zeta) dm(\zeta)\bigg|^p e^{-\frac{\alpha p}{2}|z|^2} dm(z)\Bigg)^{\frac{1}{p}}\nonumber\\
\simeq \Bigg(\int_{\CC} |k_w(\psi(\zeta))|^p \frac{|g(\zeta)|^pe^{-\frac{\alpha p}{2} |\zeta|^2}}{(1+|\zeta|)^p}  dm(\zeta)\Bigg)^{\frac{1}{p}}\nonumber
\end{align} from which the assertion in (i) follows.\\
The proof of part (ii) is similar to the preceding proof. We only have to use this time the inclusion $\mathcal{F}_\alpha^2 \subset \mathcal{F}_\alpha^p$ for $p> 2$ which can be read for instance in Theorem~2.10 of \cite{KZH2}.
\end{proof}
\begin{lemma}\label{lem3} Let  $(g,\psi)$ be a pair of entire functions on $\CC$ and $I_{(g,\psi)}$ be   a compact operator  on $\mathcal{F}_\alpha^2$. Then
$\psi(z)= az +b$ for some $a$ and $b$ in $\CC$, and $|a|<1$.
\end{lemma}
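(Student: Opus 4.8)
The plan is to first reduce to boundedness and extract a pointwise growth bound, then force $\psi$ to be affine by an averaging (Liouville-type) argument, and finally upgrade $|a|\le 1$ to $|a|<1$ using compactness. Throughout I assume $g\not\equiv 0$, since otherwise $I_{(g,\psi)}$ is the zero operator and $\psi$ is unconstrained.

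First I would note that a compact operator is bounded and that the normalized kernels $k_w$ tend to $0$ weakly as $|w|\to\infty$, so $\|I_{(g,\psi)}k_w\|\to 0$. Testing on $k_w$ and using $(I_{(g,\psi)}k_w)'(z)=\alpha\overline{w}\,k_w(\psi(z))g(z)$ together with the Littlewood--Paley identity \eqref{olivia} (for $p=2$) and the sub-mean-value property of the subharmonic function $|k_w(\psi(\cdot))g(\cdot)|^2$ on unit discs, I would recover the boundedness/compactness description of \cite{TMSS}: boundedness yields the pointwise estimate
$$|g(z)|\,e^{\frac{\alpha}{2}(|\psi(z)|^2-|z|^2)}\,\frac{1+|\psi(z)|}{1+|z|}\lesssim 1,$$
and compactness yields that the left-hand side tends to $0$ as $|z|\to\infty$. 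In particular $|g(z)|e^{\frac{\alpha}{2}(|\psi(z)|^2-|z|^2)}\lesssim 1+|z|$, i.e. $\log|g(z)|+\frac{\alpha}{2}|\psi(z)|^2\le \frac{\alpha}{2}|z|^2+\log(1+|z|)+C$.

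The crux is to conclude from this that $\psi$ is affine. Writing $\psi(z)=\sum_{n\ge 0}c_n z^n$, I would average the last inequality over the circle $|z|=r$. By subharmonicity of $\log|g|$ the circular mean $\frac{1}{2\pi}\int_0^{2\pi}\log|g(re^{i\theta})|\,d\theta$ is nondecreasing in $r$, hence bounded below by a constant $C_0$ for $r\ge 1$ (here $g\not\equiv 0$ is used), while Parseval on the circle gives $\frac{1}{2\pi}\int_0^{2\pi}|\psi(re^{i\theta})|^2\,d\theta=\sum_{n\ge 0}|c_n|^2 r^{2n}$. Thus $\frac{\alpha}{2}\sum_{n\ge 0}|c_n|^2 r^{2n}\le \frac{\alpha}{2}r^2+\log(1+r)+C-C_0$. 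For each fixed $n\ge 2$ the term $\frac{\alpha}{2}|c_n|^2 r^{2n}$ is dominated by the right-hand side $O(r^2+\log r)$; dividing by $r^{2n}$ and letting $r\to\infty$ forces $c_n=0$. Hence $\psi(z)=b+az$ with $b:=c_0$, $a:=c_1$, and the surviving $n=1$ term gives $\frac{\alpha}{2}|a|^2 r^2\le \frac{\alpha}{2}r^2+O(\log r)$, so $|a|\le 1$.

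Finally I would rule out $|a|=1$ using compactness. With $\psi$ affine and $a\neq 0$ one has $\frac{1+|\psi(z)|}{1+|z|}\to |a|\neq 0$, so the compactness statement forces $|g(z)|e^{\frac{\alpha}{2}(|\psi(z)|^2-|z|^2)}\to 0$ as $|z|\to\infty$. If $|a|=1$ then $|\psi(z)|^2-|z|^2=2\,\mathrm{Re}(a\overline{b}z)+|b|^2$, whence $|g(z)|e^{\frac{\alpha}{2}(|\psi(z)|^2-|z|^2)}=e^{\frac{\alpha}{2}|b|^2}\,|g(z)e^{\alpha a\overline{b}z}|$; since $g(z)e^{\alpha a\overline{b}z}$ is entire and its modulus tends to $0$ at infinity, it vanishes identically, contradicting $g\not\equiv 0$. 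The degenerate case $a=0$, i.e. $\psi$ constant, is affine with $|a|=0<1$ and needs no exclusion. Therefore $|a|<1$. The main obstacle is precisely the averaging step: the pointwise bound alone does not visibly control the high-order Taylor coefficients of $\psi$, and it is the combination of the monotonicity of the circular means of $\log|g|$ with Parseval's identity that turns it into the coefficient-killing inequality.
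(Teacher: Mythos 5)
Your proof is correct, but the central step is carried out by a genuinely different route than the paper's. Both arguments start from the same place: the necessary growth condition coming from boundedness/compactness (the paper embeds $\mathcal{F}_\alpha^2$ into $\mathcal{F}_\alpha^\infty$ and quotes Theorem~3.1 of \cite{TMSS} to get $\sup_z \frac{|g(z)\psi(z)|}{1+|z|}e^{\frac{\alpha}{2}(|\psi(z)|^2-|z|^2)}<\infty$ together with the vanishing of this quantity as $|\psi(z)|\to\infty$, whereas you re-derive essentially the same bound by testing on normalized kernels at $w=\psi(z_0)$; the discrepancy between your factor $\frac{1+|\psi(z)|}{1+|z|}$ and the paper's $\frac{|\psi(z)|}{1+|z|}$ is harmless since you only use the consequence $|g(z)|e^{\frac{\alpha}{2}(|\psi(z)|^2-|z|^2)}\lesssim 1+|z|$). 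Where you diverge is in forcing $\psi$ to be affine: the paper converts the pointwise bound into a bound on the maximum modulus $M_\infty(g\psi,r)$, uses its monotonicity to deduce $\limsup_{|z|\to\infty}(|\psi(z)|-|z|)\le 0$, and from this reads off that $\psi(z)=az+b$ with $|a|\le 1$ (and $b=0$ when $|a|=1$); you instead take logarithms, average over circles, control the unknown $g$-term from below by the monotonicity of the circular means of the subharmonic function $\log|g|$, and use Parseval to isolate each $|c_n|^2r^{2n}$, killing all coefficients of order $n\ge 2$. Your averaging argument is more self-contained and arguably tighter: the paper's passage from the pointwise inequality to a bound on $M_\infty(g\psi,|z|)$ (whose right-hand side still depends on the particular point $z$, not just on $|z|$) and the subsequent deduction that $\limsup(|\psi(z)|-|z|)\le 0$ forces linearity are left rather sketchy, while your Parseval step makes the coefficient-killing completely explicit; the price is that you must (correctly) flag the assumption $g\not\equiv 0$, which the paper leaves implicit. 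The final exclusion of $|a|=1$ via Liouville applied to the entire function $g(z)e^{\alpha a\overline{b}z}$ is a clean substitute for the paper's appeal to the second part of \eqref{newway}, and it is correct.
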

\begin{proof} Let $\mathcal{F}_\alpha^\infty$ denote the space of all entire functions $f$ for which
$$\sup_{z\in \CC} |f(z)| e^{-\frac{\alpha}{2} |z|^2}<\infty.$$
 Since $\mathcal{F}_\alpha^2 \subset \mathcal{F}_\alpha^\infty $, it follows that $I_{(g,\psi)}:\mathcal{F}_\alpha^2\to \mathcal{F}_\alpha^\infty $ is also compact. Then Theorem 3.1 of \cite{TMSS} ensures that
 \begin{align}\label{newway}
 \sup_{z\in \CC} \frac{|g(z)\psi(z)|}{1+|z|} e^{\frac{\alpha}{2}(|\psi(z)|^2-|z|^2)} <\infty \ \ \text{and}\ \ \lim_{|\psi(z)|\to \infty}\frac{|g(z)\psi(z)|}{1+|z|} e^{\frac{\alpha}{2}(|\psi(z)|^2-|z|^2)}=0.
 \end{align}
 Observe that the first part of \eqref{newway} implies that
 \begin{align}
 \label{newway1}
 M_\infty (g\psi, |z|) \lesssim \frac{1+|z|}{e^{\frac{\alpha}{2}(|\psi(z)|^2-|z|^2)}},
 \end{align} where $ M_\infty (g\psi, |z|)$ is the integral mean (maximum modulus)  of the function $g\psi$. Now \eqref{newway1} along with the fact that
 $ M_\infty (g\psi, |z|)$ is a nondecreasing function of $|z|$  gives
 \begin{align}\label{newway2}
 \limsup_{|z| \to \infty} \big( |\psi(z)|-|z|\big) \leq 0,
 \end{align} otherwise there would be a sequence $(z_j)$ such that $|z_j| \to \infty $ as $j\to \infty$ and
 \begin{align*}
 \limsup_{j \to \infty} \big( |\psi(z_j)|-|z_j|\big) >0.
 \end{align*} This along with  the fact that $\psi$ is an entire function implies that
 \begin{align*}
 M_\infty (g\psi, |z_j|) \lesssim \frac{1+|z_j|}{e^{\frac{\alpha}{2}(|\psi(z_j)|^2-|z_j|^2)}}
 \end{align*} is bounded which gives a contradiction whenever $g\psi$ is unbounded. The case for bounded $g\psi$ follows easily. \\
 From relation \eqref{newway2}, we deduce that   $\psi$ has the linear  form    $\psi(z)= az +b$ for some $a$ and $b$ in $\CC$ and $|a|\leq 1,$
 and $b= 0$ whenever $|a|= 1.$
From the second part of \eqref{newway}, we easily see that $|a|<1.$
\end{proof}
\section{Proof of Theorem~\ref{thm1}.}
  We may first prove the necessity of the condition following a classical approach as for example in \cite{JPJAP,ZHMS}. Since $I_{(g,\psi)}: \mathcal{F}_\alpha^2 \to \mathcal{F}_\alpha^2 $ is compact, it admits a Schmidt decomposition, and there exist an  orthonormal basis $(e_n)_{n\in\NN}$ of $\mathcal{F}_\alpha^2$  and  a sequence of nonnegative numbers  $(\lambda_{(n,g,\psi)})_{n\in \NN}$ with $\lambda_{(n,g,\psi)} \to 0$  as $n\to \infty$  such that for all $f$ in $\mathcal{F}_\alpha^2,$
\begin{equation}
\label{decomp}
I_{(g,\psi)} f= \sum_{n=1}^\infty \lambda_{(n,g,\psi)} \langle f,e_n\rangle e_n.
\end{equation} The operator  $I_{(g,\psi)}$ with such a decomposition  belongs to  the
 $\mathcal{S}_p$ class if
and only if
\begin{equation}
\label{est1}
\|I_{(g,\psi)}\|_{\mathcal{S}_p}^p = \sum_{n=1}^\infty |\lambda_{(n,g,\psi)}|^p <\infty.
\end{equation}Applying \eqref{decomp}, in particular,  to  the kernel function, we obtain the relation
\begin{equation*}
\label{normop}
\|I_{(g,\psi)} K_z\|^2= \sum_{n=1}^\infty |\lambda_{(n,g,\psi)} |^2|e_n(z)|^2
\end{equation*} and  from which we have
\begin{align}
\label{series}
 \int_{\CC} \|I_{(g,\psi)} k_z\|^p dm(z)=  \int_{\CC}\Bigg(\sum_{n=1}^\infty |\lambda_{(n,g,\psi)} |^2|e_n(z)|^2 \Bigg)^{\frac{p}{2}}e^{-\frac{p\alpha}{2}|z|^2} dm(z).
\end{align} We may now consider two different cases depending on the size of the exponent $p$ and proceed first to show the case for  $p\geq 2.$  Applying H\"older's inequality to the sum  shows that the left-hand side in \eqref{series} is bounded by
\begin{align*}
\int_{\CC} \sum_{n=1}^\infty |\lambda_{(n,g,\psi)} |^p |e_n(z)|^2\Bigg(\sum_{n=1}^\infty |e_n(z)|^2 \Bigg)^{\frac{p-2}{2}} e^{-\frac{p\alpha}{2}|z|^2} dm(z)\ \ \ \ \ \ \ \ \  \ \ \ \ \ \ \ \ \  \nonumber
\end{align*}
\vspace{-0.2in}
\begin{align}
 =\sum_{n=1}^\infty |\lambda_{(n,g,\psi)} |^p\int_{\CC}|e_n(z)|^2 e^{-\alpha|z|^2} dm(z)\nonumber\\
\simeq \sum_{n=1}^\infty |\lambda_{(n,g,\psi)}|^p =\|I_{(g,\psi)}\|_{\mathcal{S}_p}^p, \ \ \ \ \ \ \
\label{series2}
\end{align} where the last equality follows by \eqref{est1}.\\
We may now assume that  $0<p<2.$  Since $I_{(g,\psi)}$ is   assumed to be in $\mathcal{S}_p$, the positive operator
 $I_{(g,\psi)}^*I_{(g,\psi)}$ also  belongs to $\mathcal{S}_{p/2}$  \cite{KZH1}. In addition, there exists  a sequence $(f_n)$  of
 orthonormal basis  in  $\mathcal{F}_\alpha^2$ for which  we have the   Schmidt  decomposition
 \vspace{-0.2in}
 \begin{equation}
 \label{dec}
 I_{(g,\psi)}^*I_{(g,\psi)} f= \sum_{n=1}^\infty \beta_n\langle f,f_n \rangle_E f_n,
 \end{equation} where the sequence $(\beta_n)$ are the singular values of  $
 I_{(g,\psi)}^*I_{(g,\psi)}$ and $\langle.,.\rangle_E$ is an inner product in $\mathcal{F}_\alpha^2$ defined by
 \begin{equation}
\label{inner}
\langle f,h\rangle_E= f(0) \overline{h(0)} +\int_{\CC} f'(z) \overline{h'(z)} \frac{e^{-\alpha |z|^2}}{(|+|z|)^2}dm(z).
\end{equation} Observe that because of \eqref{olivia}, the inner product in \eqref{inner} gives a norm on $\mathcal{F}_\alpha^2$ equivalent to the classical norm. Now using \eqref{olivia} and since $0<p<2$  it  follows that
\vspace{-0.1in}
  \begin{align}
  \label{above}
  \int_{\CC} \|I_{(g,\psi)} k_z\|^p  dm(z) &\simeq
  \int_{\CC}\Bigg( \int_{\CC} |wk_w(\psi(\zeta))|^2 \frac{|g(\zeta)|^2e^{-\alpha |\zeta|^2}}{(1+|\zeta|)^2}  dm(\zeta)\Bigg)^{\frac{p}{2}} dm(w)  \nonumber  \\
  &\lesssim\int_{\CC} \int_{\CC} |wk_w(\psi(\zeta))|^p \frac{|g(\zeta)|^pe^{-\frac{\alpha p}{2} |\zeta|^2}}{(1+|\zeta|)^p}  dm(\zeta) dm(w),
 \end{align} where the second estimate follows by Lemma~\ref{lem2}.\\
    By completing the square in the inner product from the kernel function again and making a change  of variables, we obtain
     \begin{align}
  \label{side}
  \int_{\CC} |wk_w(\psi(\zeta))|^p dm(w)=  e^{\frac{p \alpha}{2}|\psi(\zeta)|^2}\int_{\CC} |w|^pe^{-\frac{\alpha p}{2}|\psi(\zeta)-w|^2}  dm(w)\nonumber\\
  \simeq |\psi(\zeta)|^pe^{\frac{p\alpha}{2}|\psi(\zeta)|^2}.
  \end{align}
  Applying \eqref{define} and the techniques above,  we also estimate
  \begin{align}
  \label{def}
  \Big\|\frac{\partial}{\partial \overline{w}} K_w\Big\|^2 \simeq \int_{\CC} \big|zK_w(z)\big|^2 e^{-\alpha|z|^2} dm(z) \simeq |w|^2  e^{\alpha|w|^2}, \end{align}
  from which,  \eqref{kernel},  \eqref{side},  and \eqref{sidee} we find that the double integral in \eqref{above} is in turn  bounded by a positive multiple of
  \begin{align}
  \label{newrev4}
  \int_{\CC}  \frac{| g(\zeta)\psi(\zeta)|^pe^{\frac{p\alpha}{2}|\psi(\zeta)|^2}}{(1+|\zeta|)^pe^{\frac{p\alpha}{2}|\zeta|^2}} dm(\zeta)\ \qquad  \ \qquad\ \qquad\ \qquad\ \qquad\ \qquad\ \qquad \    \nonumber\\
  \simeq  \int_{\CC}\frac{| g(\zeta)\psi(\zeta)|^p}{(1+|\zeta|)^p}e^{\frac{p\alpha}{2}(|\psi(\zeta)|^2-|\zeta|^2)} \frac{\Big\|\frac{\partial}{\partial \overline{\psi(\zeta)}}  K_{\psi(\zeta)}\Big\|^2 }{(|\psi(\zeta)|+1)^2 e^{\alpha |\psi(\zeta)|^2}} dm(\zeta)\nonumber\\
  = \sum_{n=1}^\infty \int_{\CC}\frac{| g(\zeta)\psi(\zeta)|^p}{(1+|\zeta|)^p}e^{\frac{p\alpha}{2}(|\psi(\zeta)|^2-|\zeta|^2)} \frac{|f_n'(\psi(\zeta))|^2}  {(|\psi(\zeta)|+1)^2 e^{\alpha |\psi(\zeta)|^2}} dm(\zeta).
  \end{align}Applying H\"older's inequality, it follows that the above sum is bounded by
  \begin{align}
  \sum_{n=1}^\infty \Bigg(\int_{\CC} \frac{|g(\zeta)|^2}{(1+|\zeta|)^2}|f_n'(\psi(\zeta))|^2 e^{-\alpha |\zeta|^2} dm(\zeta)\Bigg)^{\frac{p}{2}} \times \ \ \ \ \ \ \ \ \ \ \ \ \ \ \ \ \ \ \ \ \ \ \ \nonumber\\
   \Bigg(\int_{\CC}\frac{ |f_n'(\psi(\zeta))|^2}{(|\psi(\zeta)|+1)^2} e^{-\alpha |\psi(\zeta)|^2} dm(\zeta)\Bigg)^{\frac{2-p}{2}}.
      \label{detail}
  \end{align}
    Now again  since  $I_{(g,\psi)}$ belongs to the Schatten $\mathcal{S}_p$ class, it  is compact and by Lemma~\ref{lem3} $\psi$ has the linear  form    $\psi(z)= az +b$ for some $a$ and $b$ in $\CC$ and $|a|<1.$ This  together with  \eqref{olivia} and  substitution yield
           \begin{align*}
    \sup_{n\in \NN}\int_{\CC}\frac{ |f_n'(\psi(\zeta))|^2}{(|\psi(\zeta)|+1)^2} e^{-\alpha |\psi(\zeta)|^2} dm(\zeta)<\infty.
  \end{align*}
    Making use of this,  \eqref{dec}, and \eqref{inner},  we observe that  the  quantity in \eqref{detail} is bounded up,  to a positive multiple, by
    \begin{align}
    \label{final}
    \sum_{n=1}^\infty \Bigg(\int_{\CC} \frac{|g(\zeta)|^2}{(1+|\zeta|)^2}|f_n'(\psi(\zeta))|^2 e^{-\alpha |\zeta|^2} dm(\zeta)\Bigg)^{\frac{p}{2}} \lesssim
    \sum_{n=1}^\infty\langle I_{(g,\psi)}^* I_{(g,\psi)} f_n, f_n\rangle_E^{\frac{p}{2}}\nonumber\\
    =  \sum_{n=1}^\infty  \beta_n ^{\frac{p}{2}}= \|I_{(g,\psi)}^* I_{(g,\psi)}\|_{\mathcal{S}_{p/2}}^{p/2}=  \|I_{(g,\psi)}\|_{\mathcal{S}_{p}}^{p}.
  \end{align}
  From the series of estimates in  \eqref{above} to \eqref{final}, together with \eqref{series} and \eqref{series2}, we deduce that
   \begin{align*}
 \int_{\CC}\Bigg( \int_{\CC} |wk_w(\psi(\zeta))|^2 \frac{|g(\zeta)|^2e^{-\alpha |\zeta|^2}}{(1+|\zeta|^2)}  dm(\zeta)\Bigg)^{\frac{p}{2}} dm(w)\lesssim \|I_{(g,\psi)}\|_{\mathcal{S}_{p}}^{p}.
\end{align*}
From this and \eqref{above}, we conclude the estimate
\begin{align}
\label{newrev}
 \int_{\CC}\|I_{(g,\psi)} k_z\|^p dm(z)\lesssim \|I_{(g,\psi)}\|_{\mathcal{S}_{p}}^{p}.
\end{align}
We may first note that
\begin{align*}
\int_{\CC} B_{(|g|,\psi)}^{p/2} (w)dm(w)= \int_{|w|< 1} B_{(|g|,\psi)}^{p/2} (w)dm(w)+\int_{|w|\geq 1} B_{(|g|,\psi)}^{p/2} (w)dm(w)
\end{align*}
 As for $|w|>1$ one has  $B_{(|g|,\psi)}^{p/2} (w) \leq  \|I_{(g,\psi)} k_w\|^p $ and the  estimate in \eqref{newrev} implies
\begin{align}
\label{newrev1}
\int_{|w|\geq 1} B_{(|g|,\psi)}^{p/2} (w)dm(w) \lesssim \int_{\CC}\|I_{(g,\psi)} k_z\|^p dm(z).
\end{align}
On the other hand, since $I_{(g,\psi)}$ is in the Schatten $\mathcal{S}_{p}$ class, it is bounded with $\| I_{(g,\psi)}\| \lesssim \|I_{(g,\psi)}\|_{\mathcal{S}_{p}},$ where $\|I_{(g,\psi)}\|$ denotes the operator norm of the bounded operator $I_{(g,\psi)}.$ Therefore by Theorem~2.1 of \cite{TMSS}, we have
\begin{align*}
\sup_{w\in \CC}  B_{(|g|,\psi)}^{p/2}(w) \lesssim \|I_{(g,\psi)}\|^p
\end{align*} from which we have the remaining estimate
\begin{align}
\label{newrev2}
\int_{|w|< 1} B_{(|g|,\psi)}^{p/2} (w)dm(w)\lesssim \|I_{(g,\psi)} \|^p  m\{|w|<1\}\lesssim \|I_{(g,\psi)}\|_{\mathcal{S}_{p}}^p.
\end{align}
Taking into account the estimates in  \eqref{newrev}, \eqref{newrev1}, and \eqref{newrev2}, we get
\begin{align*}
\int_{\CC} B_{(|g|,\psi)}^{p/2} (w)dm(w)\lesssim  \|I_{(g,\psi)}\|_{\mathcal{S}_{p}}^{p}
\end{align*}  from which we also  have one part of the estimate in \eqref{hilbert}.\\

We now turn to the proof of the sufficiency of the condition in part (i) of the main result.  First observe that
relation \eqref{est1} implies
  \begin{align*}
  \|I_{(g,\psi)}\|_{\mathcal{S}_p}^p = \sum_{n=1}^\infty |\lambda_{(n,g,\psi)} |^p\|e_n\|^2 \simeq \sum_{n=1}^\infty |\lambda_{(n,g,\psi)} |^p\int_{\CC}|e_n(z)|^2\|K_z\|^{-2}dm(z)
  \end{align*} from which  for $p< 2,$   H\"older's inequality applied with exponent $2/p$ and subsequently invoking   relations \eqref{series}  give
\begin{align}
\label{onee}
\|I_{(g,\psi)}\|_{\mathcal{S}_p}^p  \leq \int_{\CC}\Bigg(& \sum_{n=1}^\infty |\lambda_{(n,g,\psi)} |^2 |e_n(z)|^2\Bigg)^{\frac{p}{2}} \Bigg( \sum_{n=1}^\infty |e_n(z)|^2\Bigg)^{\frac{2-p}{2}}\|K_z\|^{-2}dm(z)\nonumber\\
&=\int_{\CC}\Bigg( \sum_{n=1}^\infty |\lambda_{(n,g,\psi)} |^2 |e_n(z)|^2\Bigg)^{\frac{p}{2}} \|K_z\|^{-p}dm(z)\nonumber\\
&= \int_{\CC}\|I_{(g,\psi)} k_z\|^p dm(z)\leq \int_{\CC} B_{(|g|,\psi)}^{\frac{p}{2}} (z)dm(z).
 \end{align}
It remains to prove the assertion for $p\geq 2.$  We may first note that  condition in Theorem~\ref{thm1} along with  Theorem~3.1 of \cite{TMSS} ensure
that $I_{(g,\psi)}$ is a compact operator. We may also  recall that a compact  map $I_{(g,\psi)}$ belongs to $\mathcal{S}_p$ if and only if the sequence $\|I_{(g,\psi)} e_n\|, n\in \NN$ belongs to $\ell^p$ for any orthonormal  set  $\{e_n\}$ of $\mathcal{F}_\alpha^2$ \cite[Theorem~1.33]{KZH1}. This fact together with Lemma~\ref{lem1} imply
\begin{align}
\label{oneee}
\sum_{n=1}^\infty\| I_{(g,\psi)} e_n\|^p &\simeq \sum_{n=1}^\infty\Bigg( \int_{\CC} | e_n'(\psi(z))|^2 \frac{|g(z)|^2e^{-\alpha|z|^2}}{ (1+|z|)^2} dm(z)\Bigg)^{\frac{p}{2}} \nonumber\\
&\lesssim \sum_{n=1}^\infty\Bigg( \int_{\CC} | e_n'(w)|^2 \frac{e^{-\alpha|w|^2}}{ (1+|w|)^2}  B_{(|g|,\psi)}(w)dm(w)\Bigg)^{\frac{p}{2}} .
 \end{align}Applying H\"older's inequality again and subsequently taking into account \eqref{olivia}, \eqref{sidee}, and \eqref{def}, we see that the right-hand side above is bounded by
 \begin{align}
  \sum_{n=1}^\infty\Bigg( \int_{\CC} | e_n'(w)|^2 \frac{e^{-\alpha|w|^2}}{ (1+|w|)^2} dm(w)\Bigg)^{(p-2)/2}   \int_{\CC} | e_n'(w)|^2 \frac{e^{-\alpha|w|^2}}{ (1+|w|)^2} B_{(|g|,\psi)}^{\frac{p}{2}}(w)dm(w)\ \ \ \ \ \ \ \ \ \ \ \nonumber
  \end{align}
  \vspace{-0.2in}
  \begin{align}
 \simeq  \int_{\CC} \Bigg(\sum_{n=1}^\infty | e_n'(w)|^2 \frac{e^{-\alpha|w|^2}}{ (1+|w|)^2} \Bigg) B_{(|g|,\psi)}^{\frac{p}{2}}(w)dm(w) \nonumber\\
 \simeq\int_{\CC}  B_{(|g|,\psi)}^{\frac{p}{2}}(w)dm(w).
 \label{twoo}
    \end{align} From \eqref{onee}, \eqref{oneee}, and \eqref{twoo}, we conclude our assertion and also establish the  remaining estimate in
 \eqref{hilbert}.\\
 The  statement in part  (ii) follows  from a  simple variant of the proof of  part (i). This is because  $(C_{(g,\psi)}f)'(z) = f'(\psi(z)) g(\psi(z)) \psi'(z)$ which shows that  we only need
to replace the quantity  $g(z)$ by $ g(\psi(z))\psi'(z)$ and proceed as in the proof of the preceding part. We omit the details and left it to the interested reader.
 \vspace{-0.08in}
\section{Proof of Theorem~\ref{thm2}.}
 We may first note that  Theorem~\ref{thm1} simply means that  $I_{(g,\psi)}$  is in $\mathcal{S}_p$ class
 if and only if  the function $w \to \|I_{(g,\psi)}k_w\|$  belongs to $L^p(\CC, dm)$. Thus the  sufficiency of the condition  for the case
 $0<p\leq 2$ follows easily   from  Theorem~\ref{thm1},  and the estimates in
\eqref{side} and \eqref{above}. On the other hand, we notice that the series of estimates from \eqref{newrev4} to \eqref{final} and Lemma~\ref{lem3} give
\begin{align*}
\int_{\CC} |g(z)|^p e^{\frac{p\alpha}{2}(|\psi(z)|^2-|z|^2)}  dm(z)\qquad \ \qquad \ \qquad \ \ \qquad \ \qquad \ \qquad \ \qquad \nonumber
\end{align*}
\vspace{-0.3in}
\begin{align*}
\simeq \int_{\CC}\bigg(\frac{1+|\psi(z)|}{1+|z|}\bigg)^p |g(z)|^p e^{\frac{p\alpha}{2}(|\psi(z)|^2-|z|^2)}  dm(z) \quad \quad \quad \\
\simeq \int_{\CC} \int_{\CC} |wk_w(\psi(\zeta))|^p \frac{|g(\zeta)|^pe^{-\frac{\alpha p}{2} |\zeta|^2}}{(1+|\zeta|)^p}  dm(\zeta) dm(w)
 \lesssim \|I_{(g,\psi)}\|_{\mathcal{S}_{p}}^{p},
\end{align*} which verifies the necessity part of the case.\\
Next we prove  the  case for  $p>2$.  Taking into account the estimate in \eqref{side} and the case for $p>2$ of Lemma~\ref{lem2}, we have
\begin{align*}
\int_{\CC} |g(\zeta)|^p e^{\frac{p\alpha}{2}(|\psi(\zeta)|^2-|\zeta|^2)} \frac{|\psi(\zeta)|^p}{(1+|\zeta|)^p} dm(\zeta) \qquad \ \qquad \ \qquad \ \qquad \\
\simeq \int_{\CC}\int_{\CC}|k_w(\psi(\zeta))|^p \frac{|g(\zeta)|^pe^{-\frac{p\alpha}{2}|\zeta|^2}}{(1+|\zeta|)^p} dm(\zeta) dm(w)\\
\lesssim \int_{\CC}\|I_{(g,\psi)}k_w\|^p dm(w),
\end{align*} from which the necessity condition  follows after an application of Lemma~\ref{lem3}.\\

For sufficiency as done before, it is enough to prove that
\begin{align*}
\sum_{n=1}^\infty\| I_{(g,\psi)} e_n\|^p \leq C <\infty
\end{align*} for any orthonormal set  $\{e_n\}$  of  $\mathcal{F}_\alpha^2$. From \eqref{oneee}, we have
\begin{align*}
\sum_{n=1}^\infty\| I_{(g,\psi)} e_n\|^p &\simeq \sum_{n=1}^\infty\Bigg( \int_{\CC} | e_n'(\psi(z))|^2 \frac{|g(z)|^2e^{-\alpha|z|^2}}{ (1+|z|)^2} dm(z)\Bigg)^{\frac{p}{2}}.
 \end{align*}
   Applying  H\"older's inequality  we get
\begin{align*}
I_n: = \Bigg( \int_{\CC} | e_n'(\psi(z))|^2 \frac{|g(z)|^2e^{-\alpha|z|^2}}{ (1+|z|)^2} dm(z)\Bigg)^{\frac{p}{2}} \quad \quad \quad \quad \quad  \quad \quad \quad \quad \quad  \quad \quad \nonumber\\
\leq \Bigg( \int_{\CC} | e_n'(\psi(z))|^2 \frac{|g(z)|^p e^{-\alpha \frac{p}{2}|z|^2}(1+|\psi(z)|)^p}{ (1+|z|)^p)(1+|\psi(z)|)^2} e^{\alpha (\frac{p}{2}-1) |\psi(z)|^2}dm(z)\Bigg)\nonumber\\
 \times \ \  \Bigg(\int_{\CC}| e_n'(\psi(z)|^2\frac{e^{-\alpha|\psi(z)|^2} }{ (1+|\psi(z)|)^2} dm(z)\Bigg) ^{\frac{p-2}{2}}.
    \end{align*}
  Making a change of variables  again yields
    \begin{align*}
\int_{\CC}| e_n'(\psi(z)|^2\frac{e^{-\alpha|\psi(z)|^2} }{ (1+|\psi(z)|)^2} dm(z) \lesssim \|e_n\|^2 \lesssim 1
    \end{align*}
     which implies that
    \begin{align*}
I_n \lesssim \int_{\CC} | e_n'(\psi(z))|^2 \frac{|g(z)|^p e^{-\alpha \frac{p}{2}|z|^2}(1+|\psi(z)|)^p}{ (1+|z|)^p)(1+|\psi(z)|)^2} e^{\alpha (\frac{p}{2}-1) |\psi(z)|^2}dm(z).
    \end{align*} From this and the estimate
       \begin{align*}
 \sum_{n=1}^\infty | e_n'(\psi(z))|^2 \simeq |\psi(z)|^2 e^{\alpha |\psi(z)|^2},
  \end{align*}
  we obtain
  \begin{align*}
  \sum_{n=1}^\infty\| I_{(g,\psi)} e_n\|^p \simeq  \sum_{n=1}^\infty I_n \lesssim \int_{\CC} |g(z)|^p e^{\alpha \frac{p}{2}\big( |\psi(z)|^2-|z|^2\big)} \frac{(1+|\psi(z)|)^p}{(1+|z|)^p} dm(z)
  \end{align*} from which the result follows since, as done before, condition \eqref{HS} implies that $\psi$ is a linear map.\\

    The  statement in part  (ii) of Theorem~\ref{thm2} follows  from a  simple variant of the proof of part  (i) above.  Thus we omit the details again.
\section*{Acknowledgment}
The author would like to thank the  anonymous reviewer for providing constructive comments on an  earlier version of
this manuscript.

 \bibliographystyle{amsplain}
 
\end{document}